\newtheorem{definition}{Definition}
\newtheorem{theorem}{Theorem}
\newtheorem{lemma}{Lemma}
\newcommand{\qed}{\nobreak \ifvmode \relax \else
      \ifdim\lastskip<1.5em \hskip-\lastskip
      \hskip1.5em plus0em minus0.5em \fi \nobreak
      \vrule height0.75em width0.5em depth0.25em\fi}
\title{\LARGE \bf
A Sum-of-Squares approach to the Stability  and Control of Interconnected Systems using Vector Lyapunov Functions*
}
\author{Soumya Kundu$^{1}$ and Marian Anghel$^{2}$
\thanks{*This work was supported by the U.S. Department of Energy
through the LANL/LDRD Program.}
\thanks{$^{1}$Soumya Kundu is with the Center for Nonlinear Studies and Information Sciences Group (CCS-3), Los Alamos National Laboratory, Los Alamos, USA
        {\tt\small soumya@lanl.gov}}%
\thanks{$^{2}$Marian Anghel is with the Information Sciences Group (CCS-3), Los Alamos National Laboratory, Los Alamos, USA
        {\tt\small manghel@lanl.gov}}%
}
\begin{document}

\maketitle
\thispagestyle{empty}
\pagestyle{empty}

\begin{abstract}

Stability analysis tools are essential to understanding and controlling any engineering system. Recently{,} sum-of-squares (SOS) based methods have been used to compute Lyapunov based estimates {for the} region-of-attraction (ROA) of polynomial dynamical systems. But for a real-life large scale dynamical system this method becomes inapplicable because of growing computational burden. In such a case, it is important to develop a subsystem based stability analysis approach which is the focus of the work presented here. A parallel and scalable algorithm is used to infer stability of an interconnected system, with the help of the subsystem Lyapunov functions. Locally computable control laws are proposed to guarantee asymptotic stability under a given disturbance.

\end{abstract}

\section{INTRODUCTION}


Toward the end of the nineteenth century the Russian mathematician A.~M.~Lyapunov \cite{Lyapunov:1892} has  introduced a number of powerful tools 
for the stability analysis of nonlinear dynamical systems. His stability  results have been later generalized by Barbashin, Krasovskii, and LaSalle, 
while control system engineers have used Lyapunov's methods to design stabilizing feedback controllers --- see \cite{Haddad:2008} and references therein.
Nevertheless, constructing a system Lyapunov function is usually a difficult task, which becomes daunting when the size of the dynamical system increases.

{
A more practical approach is to define the Lyapunov function of the interconnected system as some function of the subsystem Lyapunov functions. There are different functional forms for the Lyapunov function of the interconnected system, such as a scalar Lyapunov function expressed as a weighted sum of the subsystem Lyapunov functions, or applications of vector Lyapunov functions and comparison principles \cite{Siljak:1972,Weissenberger:1973, Michel:1983,Araki:1978}. Formulations using vector Lyapunov functions \cite{Bellman:1962,Bailey:1966} are computationally attractive because of their parallel structure and scalability. In \cite{Weissenberger:1973}, it was shown that if the subsystem Lyapunov functions and the interactions satisfy certain conditions, then application of comparison equations \cite{Conti:1956,Brauer:1961,Beckenbach:1961} can provide a certificate of exponential stability of the interconnected systems. 
In this work we seek an algorithmic certification of asymptotic stability via the vector Lyapunov function approach, where each subsystem Lyapunov functions are expressed in some polynomial form.}

 {P}{rimarily} we will focus on an example of a randomly generated network of nine modified\footnote{We choose the oscillator parameters in such a way that the Van der Pol oscillators have a stable equilibrium at origin.} Van der Pol oscillators. Each Van der Pol oscillator can be represented as a two-state system with state dynamic equations as polynomials of degree three \cite{van1926}. The network is then decomposed into many interacting subsystems. Each subsystem parameters are so chosen that individually each subsystem is stable, when the disturbances from neighbors are zero. Sum-of-squares based expanding interior algorithm \cite{Wloszek:2003,Anghel:2013} is used to obtain estimate of region of attraction as sub-level sets of polynomial Lyapunov functions for each such subsystem. Finally a sum-of-squares based scalable and parallel algorithm is used to certify stability in the sense of Lyapunov of the interconnected system by using the subsystem Lyapunov functions computed in the previous step. A distributed control strategy is proposed that can guarantee asymptotic stability of the interconnected system under given disturbances. {Following some brief background in Sec.~\ref{S:background} we outline the problem statement in Sec.~\ref{S:problem}. An algorithmic approach to certifying asymptotic stability is presented in Sec.~\ref{S:stability} while a distributed control strategy is discussed in Sec.~\ref{S:control}. Sec.~\ref{S:results} shows an application of our stability analysis and control approach to a network of Van der Pol oscillators. We conclude the article in Sec.~\ref{S:conclusion}.}

\section{{BASIC CONCEPTS AND BACKGROUND}}
\label{S:background}
Before formulating the problem, let us briefly review some of the key concepts behind our analysis. We will first discuss how the stability of a dynamical system can be analyzed by constructing suitable Lyapunov functions. Then we briefly refer to sum-of-square polynomials and a very useful result which helps us in formulating the sum-of-squares problems. 

\subsection{Lyapunov Stability Methods}
\label{S:Lyap}
Let us consider the dynamical system 
\begin{align}\label{E:f}
&\dot{x}\left(t\right) = f\left(x\left(t\right)\right),\quad t\geq 0,~x\in\mathbb{R}^n, ~f\left(\mathbf{0}_{n\times 1}\right)=\mathbf{0}_{n\times 1}
\end{align}
We assume that $x=\mathbf{0}_{n\times 1}$ (for simplicity, $t$ will be often dropped when obvious) is an equilibrium point of the dynamical system\footnote{Note that this is not a restrictive assumption, since by shifting of state variables, the origin can always be made an equilibrium point.}, and $f:\mathbb{R}^n\rightarrow \mathbb{R}^n$ is locally Lipschitz. 
An important notion of stability is as follows:
\begin{definition}
The equilibrium point at origin is called asymptotically stable if it is stable in the sense of Lyapunov, and if 
\begin{align}
\exists \tilde{\delta}>0 ~\text{s.t.}~ \|x(0)\|_2<\tilde{\delta}\implies\lim_{t\rightarrow +\infty}\|x(t)\|_2=0.
\end{align}
\end{definition}

The Lyapunov stability theorem \cite{Lyapunov:1892,Slotine:1991} presents a sufficient condition of stability through the construction of a certain positive definite function.
\begin{theorem}\label{T:Lyap}
The equilbrium point $x=\mathbf{0}_{n\times 1}$ of the dynamical system in (\ref{E:f}) is stable in the sense of Lyapunov in $\mathcal{D}\in\mathbb{R}^n$, if there exists a continuously differentiable positive definite function {$\tilde{V}:\mathcal{D}\rightarrow \mathbb{R}$} (henceforth referred to as Lyapunov function) such that,
\begin{subequations}
\begin{align}
\tilde{V}\left(0\right) &= 0 \\
\tilde{V}\left(x\right) &>0,\forall x\in\mathcal{D}\backslash{\left\lbrace\mathbf{0}_{n\times 1}\right\rbrace}\\
\text{and, }-\dot{\tilde{V}}\left(x\right)&\geq 0,\forall x\in\mathcal{D}
\end{align}
\end{subequations}
Further, if $\tilde{V}$ satisfies $-\dot{\tilde{V}}(x)>0,\forall x\in\mathcal{D}\backslash{\left\lbrace\mathbf{0}_{n\times 1}\right\rbrace},$ then the equilibrium point at origin is asymptotically stable in $\mathcal{D}$.
\end{theorem}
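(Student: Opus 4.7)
The plan is to split the theorem into two claims — Lyapunov stability under the non-strict decrease condition, and asymptotic stability under the strict decrease condition — and prove each in turn using standard sublevel-set and invariance arguments. The unifying idea is that the Lyapunov function serves as a ``generalized energy'' whose monotonicity along trajectories traps the state in carefully chosen sublevel sets contained in small balls around the origin.

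For the Lyapunov-stability claim, I would first fix an arbitrary $\varepsilon>0$ small enough that the closed ball $\overline{B}_\varepsilon=\{x:\|x\|_2\leq\varepsilon\}$ lies inside $\mathcal{D}$. Because $\tilde{V}$ is continuous and strictly positive on the compact sphere $\{\|x\|_2=\varepsilon\}$, the quantity $\alpha:=\min_{\|x\|_2=\varepsilon}\tilde{V}(x)$ is strictly positive. I would then pick any $\beta\in(0,\alpha)$ and consider the compact sublevel set $\Omega_\beta:=\{x\in\overline{B}_\varepsilon:\tilde{V}(x)\leq\beta\}$, which by construction does not touch the bounding sphere. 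Continuity of $\tilde{V}$ at the origin together with $\tilde{V}(0)=0$ supplies a $\delta>0$ such that $\|x\|_2<\delta$ implies $\tilde{V}(x)<\beta$, placing the entire $\delta$-ball inside $\Omega_\beta$. The key step is then to invoke $\dot{\tilde{V}}(x)\leq 0$ to conclude that $t\mapsto\tilde{V}(x(t))$ is non-increasing, so trajectories starting in $\Omega_\beta$ cannot cross the level set $\{\tilde{V}=\alpha\}$ and hence remain inside $\overline{B}_\varepsilon$ for all $t\geq 0$. This is exactly Lyapunov stability.

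For asymptotic stability, the additional assumption $\dot{\tilde{V}}(x)<0$ for $x\neq\mathbf{0}_{n\times 1}$ makes $\tilde{V}(x(t))$ strictly monotone decreasing on any non-equilibrium trajectory, and the lower bound $\tilde{V}\geq 0$ then forces the limit $\tilde{V}(x(t))\to L$ for some $L\geq 0$. The crux is to rule out $L>0$ by contradiction: if $L$ were positive, continuity and $\tilde{V}(0)=0$ would give a radius $r>0$ with $\tilde{V}<L$ on $\{\|x\|_2<r\}$, confining the trajectory to the compact annular region $\{x\in\Omega_\beta:r\leq\|x\|_2\leq\varepsilon\}$; on this set, $-\dot{\tilde{V}}$ attains a strictly positive minimum $\gamma>0$, yielding $\tilde{V}(x(t))\leq\tilde{V}(x(0))-\gamma t$, which becomes negative in finite time. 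This contradiction forces $L=0$, and positive definiteness of $\tilde{V}$ then converts $\tilde{V}(x(t))\to 0$ into $\|x(t)\|_2\to 0$.

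I anticipate the main obstacle to be purely technical bookkeeping rather than any deep difficulty: verifying that $\overline{B}_\varepsilon\subset\mathcal{D}$ (so that all evaluations of $\dot{\tilde{V}}$ remain valid), choosing $\beta$ strictly below $\alpha$ so that $\Omega_\beta$ is a genuinely compact subset of the interior, and orchestrating the contradiction argument in the asymptotic case without appealing to regularity beyond the local Lipschitz hypothesis that ensures existence and uniqueness of solutions. The argument breaks down if any of the nested choices $\varepsilon,\beta,\delta,r$ are made carelessly, so the careful sequencing of these constants is the delicate part.
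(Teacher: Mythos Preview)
Your proof outline is correct and is essentially the standard textbook argument (as in Slotine and Li, or Khalil). However, the paper does not actually prove this theorem: it is stated as the classical Lyapunov stability theorem with references \cite{Lyapunov:1892,Slotine:1991} and used as background, so there is no paper-proof to compare against. Your sublevel-set and contradiction arguments are exactly the ones found in those references.
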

When there exists such a function $\tilde{V}\left(x\right)$, the region of attraction (ROA) of the stable equilibrium point at origin can be (conservatively) estimated as
\begin{subequations}\label{E:ROA}
\begin{align}
\mathcal{R}_A&:=\left\lbrace x\in\mathcal{D}\left| \tilde{V}(x)\leq \gamma^{max}\right.\right\rbrace\\
\text{where,}~\gamma^{max}&:=\arg\max_\gamma\left\lbrace x\in\mathbb{R}^n\left| \tilde{V}(x)\leq\gamma\right.\right\rbrace \subseteq \mathcal{D}
\end{align}
\end{subequations}
It can be noted that, without any loss of generality, the Lyapunov function can be scaled by $\gamma^{max}$, so that the ROA is given by,
\begin{subequations}\label{E:ROA2}
\begin{align}
\mathcal{R}_A:=&\left\lbrace x\in\mathbb{R}^n\left| {V}(x)\leq 1\right.\right\rbrace\\
\text{where,}~{V}(x)=& ~{\tilde{V}(x)}/{\gamma^{max}}
\end{align}
\end{subequations}
Henceforth, for simplicity, we would assume, without any serious loss of generality, that the ROA is estimated to be sub-level set of ${V}(x)=1$.

\subsection{Sum-of-Squares and Putinar's Positivestellensatz}
\label{S:SOSmethod}
 While the Theorem~\ref{T:Lyap} gives a sufficient condition for stability, it is still not a trivial task to find a suitable function $V\left(x\right)$ that satisfies the conditions of stability, even when the origin is a stable equilbrium point. Relatively recent studies have explored how sum-of-squares based optimization techniques can be utilized in finding Lyapunov functions by restricting the search space to sum-of-square polynomials \cite{Wloszek:2003,Parrilo:2000,Tan:2006,Anghel:2013}. Let us denote $\mathcal{R}_n$ as the set of all polynomials in $x\in\mathbb{R}^n$. Then,
\begin{definition}
A multivariate polynomial $p(x) \in \mathcal{R}_n$ is a sum-of-squares (SOS) if there exist some polynomial functions $h_i(x), i = 1\ldots r$ such that 
$p(x) = \sum_{i=1}^r h_i^2(x)$,
and the set of all such SOS polynomials is denoted by
\begin{align}
\Sigma_{n} &:= \left\lbrace p\left(x\right)\in\mathcal{R}_n\left| ~p \text{ is SOS}\right.\right\rbrace
\end{align}
\end{definition}
Given a polynomial $p\in\mathcal{R}_n$, checking if it is SOS is a semi-definite problem which can be solved with a MATLAB$^\text{\textregistered}$ toolbox SOSTOOLS \cite{sostools13,Antonis:2005a} along with a semidefinite programming solver such as SeDuMi \cite{Sturm:1999}.

SOS technique can be used to find a polynomial Lyapunov function $V\left(x\right):\mathbb{R}^n\rightarrow \mathbb{R}$, with $V(\mathbf{0}_{n\times 1})=0$, which satisfies the following SOS conditions \cite{sostools13,Antonis:2002,Wloszek:2003,Wloszek:2005,Antonis:2005,Antonis:2005a,Antonis:2005b },
\begin{subequations}\label{E:Lyap_SOS}
\begin{align}
 V(x) - \phi_1(x)   &\in \Sigma_n , \forall x \in  \mathcal{D}  \\
 -\dot{V}(x)    - \phi_2(x)   &    \in \Sigma_n , \forall x \in  \mathcal{D}
\end{align}
\end{subequations}
for some domain $\mathcal{D}$ {around the origin and positive definite functions $\phi_1(x),~\phi_2(x)$}. 
Often it is convenient to choose $\mathcal{D}:=\left\lbrace x\in\mathbb{R}^n\left\vert p(x)<\beta\right.\right\rbrace$, for some positive definite function $p(x)$ and $\beta>0$.

An important result from algebraic geometry called Putinar's Positivstellensatz theorem \cite{Putinar:1993,Lasserre:2009} helps in translating the SOS conditions into SOS feasibility problems. Before stating the theorem, let us define:
\begin{definition}\label{D:quad mod}
Given $g_j\in\mathcal{R}_n$, for $j=1,2,\dots,m$, the quadratic module generated by $g_j$'s is 
$\mathcal{M}(g_1,g_2,\dots,g_m):=\left\lbrace \sigma_0 + \sum_j\sigma_jg_j\left\vert \sigma_0,\sigma_j\in\Sigma_n,\forall j\right.\right\rbrace$
\end{definition}
Then the Putinar's Positivestellensatz theorem states,
{\begin{theorem}\label{T:Putinar}
Let $\mathcal{K}= \left\lbrace x\in\mathbb{R}^n\left\vert g_1(x) \geq 0, \dots , g_m(x)\geq 0\right.\right\rbrace$ be a
compact set. Suppose there exists $u(x)\in\mathcal{R}_n$ such that 
\begin{subequations}\label{E:Putinar}
\begin{align}
& u(x)\in\mathcal{M}(g_1, g_2,\dots , g_m),\\
\text{and,}~&\left\lbrace x\in\mathbb{R}^n\left\vert u(x)\geq 0\right.\right\rbrace~\text{is compact.}
\end{align}
\end{subequations}
If $p(x)$ is positive on $\mathcal{K}$, then $p(x)\in \mathcal{M}(g_1, g_2,\dots , g_m)$.
\end{theorem}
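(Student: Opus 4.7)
The plan is to follow the classical Hahn--Banach separation strategy used in the moment problem literature; the argument parallels Putinar's original proof \cite{Putinar:1993} and its presentation in \cite{Lasserre:2009}. The hypothesis \eqref{E:Putinar} is the \emph{Archimedean} property of the quadratic module $M := \mathcal{M}(g_1,\dots,g_m)$, and my first step would be to convert it into the algebraically convenient form: there exists $N>0$ such that $N - \sum_{i=1}^n x_i^2 \in M$. Starting from $u\in M$ with $\{u\geq 0\}$ compact, I would pick $R$ so large that $\{u\geq 0\}\subseteq \{\|x\|^2<R\}$, apply a classical Positivstellensatz identity to $R-\|x\|^2$ on $\{u\geq 0\}$, and absorb the identity into $M$. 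From this it follows that for every $f\in\mathcal{R}_n$ there exists a constant $N_f>0$ with $N_f\pm f \in M$.

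Next I would argue by contradiction: assume $p\notin M$. Since $M$ is a convex cone in the (infinite dimensional) vector space $\mathcal{R}_n$ and does not contain $p$, a geometric Hahn--Banach separation produces a nonzero linear functional $L:\mathcal{R}_n\to\mathbb{R}$ with $L(q)\geq 0$ for all $q\in M$ and $L(p)\leq 0$. Since $1\in M$ I may normalize $L(1)=1$. The Archimedean bound $N_f\pm f\in M$ immediately gives the two-sided estimate $|L(f)|\leq N_f$ for every polynomial $f$, so $L$ is continuous with respect to the sup-norm on any compact ball $K'$ containing $\{u\geq 0\}$, and therefore extends to a positive linear functional on $C(K')$.

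The third step is to realize $L$ as integration. By the Riesz--Markov--Kakutani (equivalently, Haviland's) representation theorem, there exists a Borel probability measure $\mu$ supported in $K'$ with $L(f)=\int f\,d\mu$ for all $f\in\mathcal{R}_n$. I would then show $\mathrm{supp}(\mu)\subseteq \mathcal{K}$ by noting that $g_j q^2\in M$ for every $q\in\mathcal{R}_n$ and every $j$, so $\int g_j q^2\,d\mu\geq 0$; a standard polynomial-density argument (approximating the characteristic function of $\{g_j<0\}$ by squared polynomials) then forces $\mu(\{g_j<0\})=0$ and hence $\mathrm{supp}(\mu)\subseteq\mathcal{K}$. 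Finally, since $p>0$ on the compact set $\mathcal{K}$, there is $\varepsilon>0$ with $p\geq\varepsilon$ on $\mathcal{K}$, giving $L(p)=\int_{\mathcal{K}} p\,d\mu\geq \varepsilon>0$, which contradicts $L(p)\leq 0$.

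The main obstacle will be the third step: turning the purely algebraic separating functional $L$ into an honest integral against a Borel measure. This is where the Archimedean property is indispensable, because it is precisely the uniform bound $|L(f)|\leq N_f$ that lets one extend $L$ from polynomials to $C(K')$ and invoke Riesz representation; without it, the moment problem for $\{L(x^\alpha)\}$ need not be determinate. The reduction in Step~1 from the geometric hypothesis \eqref{E:Putinar} to the algebraic Archimedean form $N-\sum x_i^2\in M$ is also delicate and is the place where the compactness of $\{u\geq 0\}$, as opposed to just of $\mathcal{K}$, truly enters.
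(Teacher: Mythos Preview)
The paper does not prove Theorem~\ref{T:Putinar}; it is quoted as a background result from \cite{Putinar:1993,Lasserre:2009} and used as a black box to convert set-containment constraints into SOS feasibility conditions. Consequently there is no ``paper's own proof'' to compare against. Your sketch is the standard functional-analytic route to Putinar's theorem, essentially the presentation one finds in \cite{Lasserre:2009}: reduce the geometric hypothesis to the Archimedean condition $N-\sum_i x_i^2\in M$, separate a putative $p\notin M$ from the cone $M$ by a linear functional $L$, use the Archimedean bound to extend $L$ to a positive functional on $C(K')$, represent it as a measure supported in $\mathcal{K}$, and derive the contradiction $L(p)>0$.

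One point worth tightening if you write this out in full: the Hahn--Banach step is not quite as immediate as stated, because $M$ is a convex cone in the infinite-dimensional space $\mathcal{R}_n$ that is in general neither closed nor possessed of interior, so a naive separation of $p$ from $M$ need not give $L(p)\leq 0$ with $L\not\equiv 0$. The usual remedies are either to pass to finite-degree truncations (where the cones are closed), or to invoke the Archimedean property earlier to show that $M$ has algebraic interior (since $1$ is an interior point: $1+\varepsilon f\in M$ for all small $\varepsilon$ by $N_f\pm f\in M$), after which Eidelheit-type separation applies. This is a routine fix, but it is exactly the kind of detail that distinguishes a sketch from a proof, and it is where many first attempts at writing down Putinar's argument stumble.
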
}
Note: often in this work, {for the $g_i$'s used,} the constraints (\ref{E:Putinar}) {would be redundant, i.e. the existence of $u(x)$ would be guaranteed \cite{Lasserre:2009}}\footnote{Simplicity in formulating an SOS problem motivated us to use Putinar's version of Positivstellensatz over other, more general, versions of the Positivstellensatz theorem \cite{Lasserre:2009}.}.

\section{{PROBLEM OUTLINE}}\label{S:problem}
 {L}{et} us assume that the dynamical system in (\ref{E:f}) is in polynomial form, i.e. $f$ is a vector of $n$ polynomials\footnote{If the dynamics is not in polynomial form, it has to be recasted into a polynomial form, with possible additions of equality constraints \cite{Antonis:2002, Antonis:2005,Anghel:2013,Anghel:2013b}, a case not considered in this work.}. 
 Given the full dynamical system (\ref{E:f}), we can decompose it into subsystems
\begin{subequations}\label{E:fi}
\begin{align}
\forall i =1,2,\dots&,m,\nonumber\\
&\dot{x}_i = f_i(x_i) + g_i(x), \quad x_i\in\mathbb{R}^{n_i},\\
&f_i(\mathbf{0}_{n_i\times 1})=\mathbf{0}_{n_i\times 1},~g_i(\mathbf{0}_{n\times 1})=\mathbf{0}_{n_i\times 1}\\
& x = \left( x_1^T ,x_2^T, \dots, x_m^T\right)^T \in \mathbb{R}^{n}\\
& n=\sum_{i=1}^m n_i\, , ~x_i\cap x_j = \emptyset \label{E:non-overlap}
\end{align}
\end{subequations}
In the decomposed system description, the $f_i$'s denote the isolated subsystem dynamics, and $g_i$'s are the interactions from the neighbors \cite{Anghel:2013}. 
Let us also denote by
\begin{align}\label{E:Ni}
\mathcal{N}_i := \left\lbrace i\right\rbrace\cup\left\lbrace j\left\vert ~\exists x_i,x_j, ~\text{s.t.}~g_{ij}\left(x_i,x_j\right)\neq 0 \right.\right\rbrace,~\forall i,
\end{align}
the set of neighbors (including the subsystem itself). We assume that the isolated subsystems are individually (locally) stable, and  there exist Lyapunov functions for each of the isolated subsystems.
The goal is to develop a framework for the stability analysis of the full interconnected system by using the local subsystem Lyapunov functions and considering the neighbor interactions. 

Given a decomposition (\ref{E:fi}), the next step is to find polynomial Lyapunov functions $V_i(x_i):\mathbb{R}^{{n}_i}\rightarrow \mathbb{R}$, such that \cite{Wloszek:2003},
\begin{subequations}\label{E:initV}
\begin{align}
\forall x_i \in\mathcal{D}_i,&\quad V_i(x_i) - \phi_{i1}(x_i)\in \Sigma_{{n}_i},\label{E:Via}\\
\text{and,}&\quad -\nabla V_i(x_i)^T {f}_i(x_i) - \phi_{i2}(x_i)\in \Sigma_{{n}_i}\label{E:Vib}\\
\text{where,}&\quad \mathcal{D}_i:=\left\lbrace x_i\in\mathbb{R}^{n_i}\left| p_i(x_i)\leq \beta_i\right.\right\rbrace\notag
\end{align}
\end{subequations}
for some {positive definite functions $\phi_{i1}(x_i),~\phi_{i2}(x_i),~p_i(x_i)$} and positive scalars $\beta_i$. {Starting from an initial Lyapunov function candidate obtained using (\ref{E:initV}) and a corresponding estimate of the region of attraction, an iterative process called \textit{expanding interior algorithm}, \cite{Wloszek:2003,Anghel:2013}, is used to iteratively enlarge the estimate of the region of attraction by finding a better Lyapunov function at each step of the algorithm.} At the completion of this iterative step, the stability of each isolated subsystem (assuming no interaction) is quantified by its Lyapunov function $V_i(x_i)$, with an estimation of the boundary of the domain of attraction given by $\mathcal{R}_{A,i} = \left\lbrace x_i\in\mathbb{R}^{n_i}\left|V_i(x_i)\leq 1\right.\right\rbrace$.

The Lyapunov level-sets can be used to express the strength of a disturbance. The equilibrium point of the system at origin corresponds to the level set $V_i(\mathbf{0}_{n_i\times 1})=0,\forall i$.  If there is a disturbance from this equilibrium point, the states of the system would move to some point $x(0)$ away from the origin. This disturbed initial condition would result in positive level-sets $V_i(x_i(0))=\gamma_i^0\in\left(0,1\right]$ for some or all of the subsystems. 
A necessary and sufficient condition of asymptotic stability can then be translated into the condition
\begin{align}\label{E:cond_asymp}
\forall i, ~V_i(x_i(0))=\gamma_i^0\implies\forall i, ~\lim_{t\rightarrow +\infty}{V}_i(x_i(t))=0
\end{align}

In the rest of the article, we present SOS algorithms to test stability conditions and design local (subsystem-level) control laws to achieve asymptotic stability.

\section{{STABILITY UNDER INTERACTIONS}}\label{S:stability}
It is assumed that the isolated subsystems in (\ref{E:fi}) are all (locally) asymptotically stable, and there exist subsystem Lyapunov functions $V_i(x_i),\forall i$. {The estimated} region of attraction of the interconnected system under no interaction, $\mathcal{R}_A^0$, is given by the cross-product of the regions of attraction of the isolated subsystems, $\mathcal{R}_{A,i}$, which are defined as sub-unity-level sets of the corresponding (properly scaled) subsystem Lyapunov functions (as in (\ref{E:ROA})-(\ref{E:ROA2})), i.e.
\begin{subequations}\label{E:ROA_isol}
\begin{align}
\mathcal{R}_A^{0} :=&~\mathcal{R}_{A,1}\times \mathcal{R}_{A,2}\times \dots \times\mathcal{R}_{A,m} \\
\text{where},~ \mathcal{R}_{A,i} = &\left\lbrace x_i\in\mathbb{R}^{n_i}\left|V_i(x_i)\leq 1\right.\right\rbrace, ~\forall i
\end{align}
\end{subequations}
In presence of non-zero interactions, the resulting ROA would be different. If there exists a Lyapunov function for the interconnected system, the ROA for the whole system could be expressed as some sub-level set of that Lyapunov function. While it is very hard to obtain a scalar Lyapunov function for the full interconnected system, one could use vector Lyapunov function approach to obtain certification of stability in a scalable way.
%

In this present work, we choose not to impose any further restriction on the Lyapunov functions $V_i(x_i)$ than requiring that those are in polynomial forms, and concern ourselves with asymptotic stability. In general, it is difficult to test a necessary and sufficient condition of asymptotic stability, such as the one given in (\ref{E:cond_asymptotic}). Nevertheless, it is possible to derive sufficient conditions of asymptotic stability under certain scenarios. Let us now present a distributed iterative procedure which can be used to certify asymptotic stability in a domain defined by sub-level sets of the subsystem Lyapunov functions.

\subsection{Algorithmic Test of Asymptotic Stabiltiy}\label{S:asymptotic}
Before proceeding to explaining our algorithm, let us first note the following result:
\begin{lemma}\label{L:asymptotic}
Suppose, for all $i\in\left\lbrace 1,2,\dots,m\right\rbrace$, there exists a strictly monotonically decreasing sequence of scalars $\left\lbrace \epsilon_i^k\right\rbrace,k\in\left\lbrace 0,1,2,\dots\right\rbrace$, such that
\begin{subequations}\label{E:cond_asymptotic}
\begin{align}
\forall i,k,~\dot{V}_i(x)&:=\nabla{V}_i(x_i)^T\left(f_i(x_i)+g_i(x)\right)<0,~\forall x\in\mathcal{D}_i^k\\
\text{where,}~\mathcal{D}_i^k &:=\left\lbrace x\in\mathbb{R}^n\left\vert \begin{array}{c}\epsilon_i^{k+1}\leq V_i(x_i)\leq\epsilon_i^k,\\V_j(x_j)\leq\epsilon_j^k~\forall j\neq i\end{array}\right.\right\rbrace \label{E:Dk}
\end{align}
\end{subequations}
Then the system (\ref{E:f}) is asymptotically stable in the domain $\left\lbrace x\in\mathbb{R}^n\left| \bigcap_{i=1}^m V_i(x_i)\leq \epsilon_{i}^0\right.\right\rbrace$, if $\lim_{k\rightarrow +\infty} \epsilon_i^k= 0,\forall i$\footnote{{If the limit condition does not hold, we can only guarantee stability in the sense of Lyapunov \cite{Slotine:1991,Lyapunov:1892}}}.
\end{lemma}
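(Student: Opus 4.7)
The plan is to exhibit a nested family of closed sets $\Omega_k := \{x \in \mathbb{R}^n \mid V_i(x_i) \leq \epsilon_i^k,~\forall i\}$ for $k=0,1,2,\dots$, which by strict monotonicity of each sequence $\{\epsilon_i^k\}$ satisfy $\Omega_{k+1} \subseteq \Omega_k$. The argument splits naturally into two parts: (i) each $\Omega_k$ is forward invariant under the flow of (\ref{E:f}); and (ii) any trajectory starting in $\Omega_k$ reaches $\Omega_{k+1}$ in a finite time that can be bounded uniformly. Iterating (ii) and using $\epsilon_i^k \to 0$ together with the positive definiteness of each $V_i$ (which forces $\Omega_k$ to shrink to $\{0\}$), one obtains $\lim_{t\to\infty} x(t) = 0$. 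Part (i) additionally yields stability in the sense of Lyapunov by the standard argument: given any neighborhood of the origin, pick $k$ large so that $\Omega_k$ lies inside it and then choose a sub-level set of initial conditions contained in $\Omega_k$.

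For forward invariance of $\Omega_k$, I would argue by contradiction. Suppose a trajectory starting in $\Omega_k$ leaves it; let $t^\star$ be the first exit time. Then some index $i^\star$ satisfies $V_{i^\star}(x_{i^\star}(t^\star)) = \epsilon_{i^\star}^k$ and $\dot V_{i^\star}(x(t^\star))\geq 0$, while $V_j(x_j(t^\star))\leq \epsilon_j^k$ for every $j\neq i^\star$ (since $t^\star$ is the \emph{first} exit). But this places $x(t^\star)$ on the outer face $V_{i^\star}=\epsilon_{i^\star}^k$ of $\mathcal{D}_{i^\star}^k$, where hypothesis (\ref{E:cond_asymptotic}) forces $\dot V_{i^\star}<0$, a contradiction.

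For the finite-time transit $\Omega_k\to\Omega_{k+1}$, I would fix $i$ and follow $V_i(x_i(t))$. Whenever $V_i(x_i(t))>\epsilon_i^{k+1}$, the trajectory, being in $\Omega_k$ by part (i), lies in $\mathcal{D}_i^k$, so $\dot V_i<0$. Compactness of $\mathcal{D}_i^k$ (inherited from compactness of the sub-level sets of $V_i$, a standing assumption in this ROA framework) together with continuity of $\dot V_i$ yields a uniform bound $\dot V_i \leq -\delta_i^k <0$ on $\mathcal{D}_i^k$. Thus $V_i$ decreases strictly and must reach $\epsilon_i^{k+1}$ within at most $(\epsilon_i^k-\epsilon_i^{k+1})/\delta_i^k$ time units. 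The main obstacle is ruling out that $V_i$ later crosses $\epsilon_i^{k+1}$ upward: at any such crossing one has $V_i=\epsilon_i^{k+1}$ and every $V_j\leq \epsilon_j^k$, so the point still lies in $\mathcal{D}_i^k$ (now on its inner face) and hypothesis (\ref{E:cond_asymptotic}) again forces $\dot V_i<0$, contradicting the upward crossing. Hence after a finite time bounded by $T^k := \max_i (\epsilon_i^k-\epsilon_i^{k+1})/\delta_i^k$, the trajectory is trapped in $\Omega_{k+1}$. Iterating over $k$ and combining with (i) yields the claimed asymptotic stability.
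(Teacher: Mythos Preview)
Your proof is correct and follows essentially the same route as the paper's: both obtain a uniform negative bound on $\dot V_i$ over the compact annular region $\mathcal{D}_i^k$ (the paper writes $\bar r_i^k:=\sup_{\mathcal{D}_i^k}\dot V_i<0$, you write $-\delta_i^k$), deduce finite-time transit from $\Omega_k$ to $\Omega_{k+1}$, and iterate using $\epsilon_i^k\to 0$. You are in fact more careful than the paper in explicitly proving forward invariance of each $\Omega_k$ via a first-exit-time argument and in ruling out an upward re-crossing of the inner level $V_i=\epsilon_i^{k+1}$, two steps the paper's appendix glosses over.
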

\begin{proof}
Please refer to Appendix~\ref{A:proof}.
\end{proof}

Using the Lemma~\ref{L:asymptotic} we can devise a simple iterative SOS algorithm to certify whether or not a domain $\mathcal{D}$ defined by
\begin{align}\label{E:D}
\mathcal{D}:=\left\lbrace x\in\mathbb{R}^n\left| \bigcap_{i=1}^m V_i(x_i)\leq v_{oi}\right.\right\rbrace, 
\end{align}
for some scalars $v_{oi}\in\left(0,1\right],\forall i$, is a region of asymptotic stability for the system in (\ref{E:f}). It is to be noted that, using the Putinar's Positivstellensatz theorem (Theorem~\ref{T:Putinar}), the condition in (\ref{E:cond_asymptotic}) essentially translates into equivalent SOS feasibility conditions
\begin{align}\label{E:cond_asymptotic_SOS}
&\forall i,k,~\exists \sigma_{i0}^k,\sigma_{ij}^k\in\Sigma_{\bar{n}_i},~\text{s.t.}\\
&-\nabla{V}_i^T\left(f_i+g_i\right) -\sigma_{i0}^k\left(V_i-\epsilon_i^{k+1}\right) - \sum_{j\in\mathcal{N}_i}\sigma_{ij}^k\left(\epsilon_j^k-V_j\right)\in\Sigma_{\bar{n}_i}\notag\\
&\text{where,}~\bar{n}_i = \sum_{j\in\mathcal{N}_i}n_j.\notag
\end{align}
The algorithmic steps to ascertain asymptotic stability are as outlined below:
\begin{enumerate}
\item We initialize $\epsilon_i^0=v_{oi},\forall i\in\left\lbrace 1,2,\dots,m\right \rbrace$, and choose a sufficiently small $\bar{\epsilon}\in\mathbb{R}^+$.

\item\label{I:iteration} At the start of the $k$-th iteration loop, we assume to {know} the scalars $\left\lbrace \epsilon_i^0,\epsilon_i^1,\dots,\epsilon_i^k\right\rbrace,\forall i$, and our aim is to compute the scalars {$\epsilon_i^{k+1},~\forall i$} such that (\ref{E:cond_asymptotic_SOS}) holds. Essentially we want to solve the optimization problem,
\begin{subequations}\label{E:max_epsilon}
\begin{align}
\forall i,~ &\min_{\sigma_{i0}^k,\sigma_{ij}^k,\epsilon_i^{k+1}} ~\epsilon_i^{k+1}\\
&\text{s.t.},~ \text{the condition in (\ref{E:cond_asymptotic_SOS}) holds.}
\end{align}
\end{subequations}
This is solved by performing a bisection search for minimum $\epsilon_i^{k+1}$ over the range $\left[0,\epsilon_i^k\right]$. 

If (\ref{E:max_epsilon}) is infeasible at $0$-th iteration\footnote{Because $V_i$'s, $f_i$'s and $g_i$'s are polynomial, if (\ref{E:max_epsilon}) is feasible at $0$-th iteration for all $i$, then it is also feasible for all subsequent iterations.} for any $i\in\left\lbrace 1,2,\dots,m\right\rbrace$, we conclude that the system cannot be guaranteed to be asymptotically stable in $\mathcal{D}$, and abort the iteration. Otherwise we move on to step \ref{I:conclusion}.

\item\label{I:conclusion} If $\left(\epsilon_i^k-\epsilon_i^{k+1}\right)\geq\bar{\epsilon},\forall i$, we continue from step \ref{I:iteration} for the ($k$+1)-th iteration loop. Otherwise we stop the iteration deciding that the limits of the sequences $\left\lbrace\epsilon_i^k\right\rbrace,\forall i,$ have been attained. Further, if the {limits are all zero, we certify asymptotic stable in $\mathcal{D}$}.
\end{enumerate}

\subsection{Remarks}\label{S:asymptotic_remarks}
The algorithm presented in Sec.~\ref{S:asymptotic} describes how one can determine asymptotic stability of an interconnected system in a domain $\mathcal{D}$ defined by the subsystem sub-level sets. This test can be performed locally, and in a parallel way, at each subsystem level. The Lyapunov functions $V_i$'s are to be found before the start of the analysis, and communicated to the neighboring subsystems. Then during each analysis, it is assumed that the neighboring subsystems can communicate with each other the computed sequences $\left\lbrace \epsilon_i^k\right\rbrace$ {in} real-time. With the help of {the} stored Lyapunov functions, and the updated $\left\lbrace \epsilon_i^k\right\rbrace$ of the neighbors, each subsystem will continue the iterative process outlined in Sec.~\ref{S:asymptotic}. Since only the neighbor information is required, this algorithm is reasonably scalable with respect to the size of the full interconnected system. Moreover, the algorithm motivates the design of a distributed control strategy that can ascertain asymptotic stability.

\section{{DECENTRALIZED CONTROL}}\label{S:control}
In this section, we discuss design of a \textit{local} and \textit{minimal} control strategy such that the system in (\ref{E:f}) is asymptotically stable in a domain $\mathcal{D}$ defined in (\ref{E:D}). We use the term \textit{minimal} to suggest that the control be applied only in certain regions, and not everywhere, in the state space, while by the term \textit{local} we suggest that the control be computable and implementable on a subsystem level.  

We envision the control to be computed by each subsystem at each iteration loop. At $k$-th iteration, $\forall k\in\left\lbrace 0,1,2,\dots\right\rbrace$, the  $i$-th subsystem, $\forall i\in\left\lbrace 1,2,\dots,m\right\rbrace$ performs the following tasks:
\begin{enumerate}
\item It identifies if it belongs to the following set 
\begin{align}
\mathcal{U}^k&:= \left\lbrace i\in\left\lbrace 1,\dots,m\right\rbrace\left\vert \begin{array}{c}\nabla{V}_i^T\left(f_i+g_i\right)\geq0, \\V_i=\epsilon_i^k,\\V_j\leq\epsilon_j^k~\forall j\in\mathcal{N}_i\setminus\{i\}\end{array}\right.\right\rbrace
\end{align}
which can be checked locally. If $i\notin\mathcal{U}^k$, control is not necessary and it sets $F_i^k\equiv\mathbf{0}_{n_i\times 1}$ and proceeds to task~\ref{I:control_dyn}. If, however, $i\in\mathcal{U}^k$, it proceeds to task~\ref{I:Fi} to compute a control law.

\item\label{I:Fi} If $i\in\mathcal{U}$, a polynomial state-feedback control law $F_i^k:\mathbb{R}^{n_i}\rightarrow\mathbb{R}^{n_i}$, $F_i^k(\mathbf{0}_{n_i\times 1} )= \mathbf{0}_{n_i\times 1}$, is {computed such that}
\begin{align}\label{E:Fi}
\left.\nabla{V}_i^T\!\left(f_i+g_i + F_i^k\right)\right\vert_{\left\lbrace V_i=\epsilon_i^k, ~V_j\leq\epsilon_j^k~\forall j\in\mathcal{N}_i\setminus\{i\}\right\rbrace}\!<\!0.
\end{align}
This produces the equivalent SOS condition,
\begin{align}\label{E:sos_K}
&-\!\!\nabla{V}_i^T\!\!\!\left(\!f_i\!+\!g_i\!+\!F_i^k\!\!\right)\!\! -\!\!\rho_{i}^k\!\!\left(\!\epsilon_i^{k}\!-\!V_i\!\!\right) \!\!-\!\! \!\!\sum_{j\in\mathcal{N}_i\setminus\{i\}}\!\!\!\!\sigma_{ij}^k\!\!\left(\!\epsilon_j^k\!-\!V_j\!\!\right)\!\!\in\!\!\Sigma_{\bar{n}_i}\notag\\
&\rho_{i}^k\in\mathcal{R}_{\bar{n}_i},~\sigma_{ij}^k\in\Sigma_{\bar{n}_i}\forall j\neq i,~\bar{n}_i = \sum_{j\in\mathcal{N}_i}n_j
\end{align}

\item\label{I:control_dyn} Finally it performs the search over minimum $\epsilon_i^{k+1}$, as in (\ref{E:max_epsilon}), with the un-controlled subsystem dynamics $\left(f_i+g_i\right)$ in the feasibility condition (\ref{E:cond_asymptotic_SOS}) is replaced by the controlled dynamics $\left(f_i+g_i+F_i^k\right)$.
\end{enumerate}

To summarize, each subsystem $i$ computes control laws $F_i^k:\mathbb{R}^{n_i}\rightarrow\mathbb{R}^{n_i}$, with $F_i^k(\mathbf{0}_{n_i\times 1} )= \mathbf{0}_{n_i\times 1}$, during each $k$-th iteration, so that the subsystem dynamics under control becomes:
\begin{align}
\forall i,~\forall k, ~&\forall x\in\mathcal{D}_i^k,\notag\\
\dot{x}_i &= \left\lbrace \begin{array}{ll}f_i(x_i) + g_i(x), ~&i\notin\mathcal{U}^k\\
			f_i(x_i) + g_i(x)+F_i^k(x_i), ~&i\in\mathcal{U}^k\end{array}\right.
\end{align}
where $\mathcal{D}_i^k$ were defined in (\ref{E:Dk}).

{\subsection{Remarks}
Often it is important to impose certain additional constraints on the possible control laws, such as bounds on the control effort. Although control bounds can be easily incorporated in the SOS formulation, we decide to keep that for future studies. We note, however, that since we apply controls $F_i^k$ only on certain subsystems $i\in\mathcal{U}^k$, and in certain domains $\mathcal{D}_i^k\subseteq\mathcal{D}$, the control effort would be reasonably bounded.}

\section{{RESULTS}}\label{S:results}
Let us describe the model of the interconnected system that we use here, and two examples to illustrate the applications of the stability analysis algorithm and control design.
 
 \subsection{Model Description}\label{S:model}
 {W}{e} will consider a network of nine Van der Pol oscillators \cite{van1926}, as shown in Fig.~\ref{F:osc_net9}. 
 \begin{figure}[thpb]
      \centering
	\includegraphics[width=3.5in]{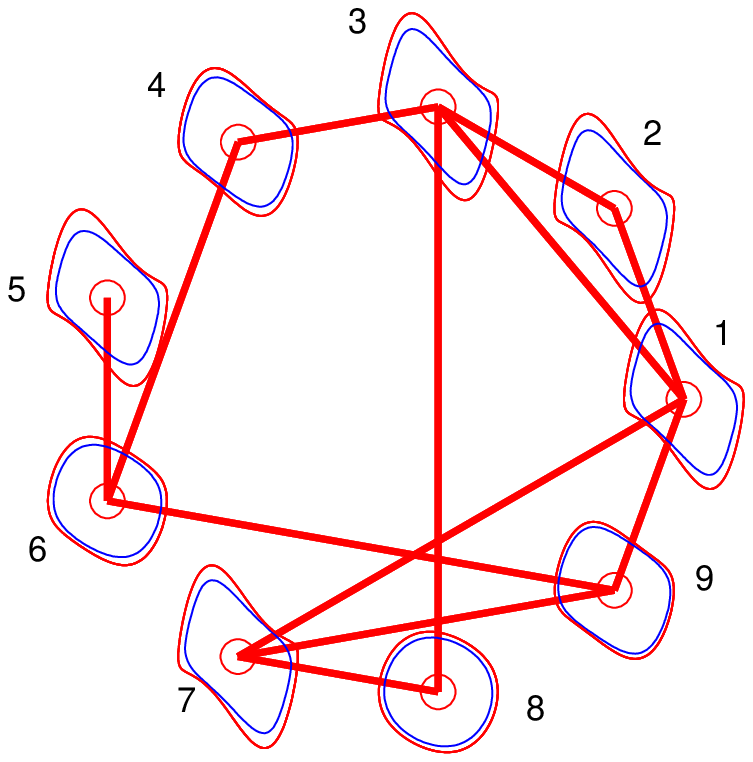} 
	      \caption{A network of nine Van der Pol oscillators along with their isolated regions of attraction.}
      \label{F:osc_net9}
   \end{figure}  
    \begin{figure}[thpb]
      \centering
	\includegraphics[width=3in]{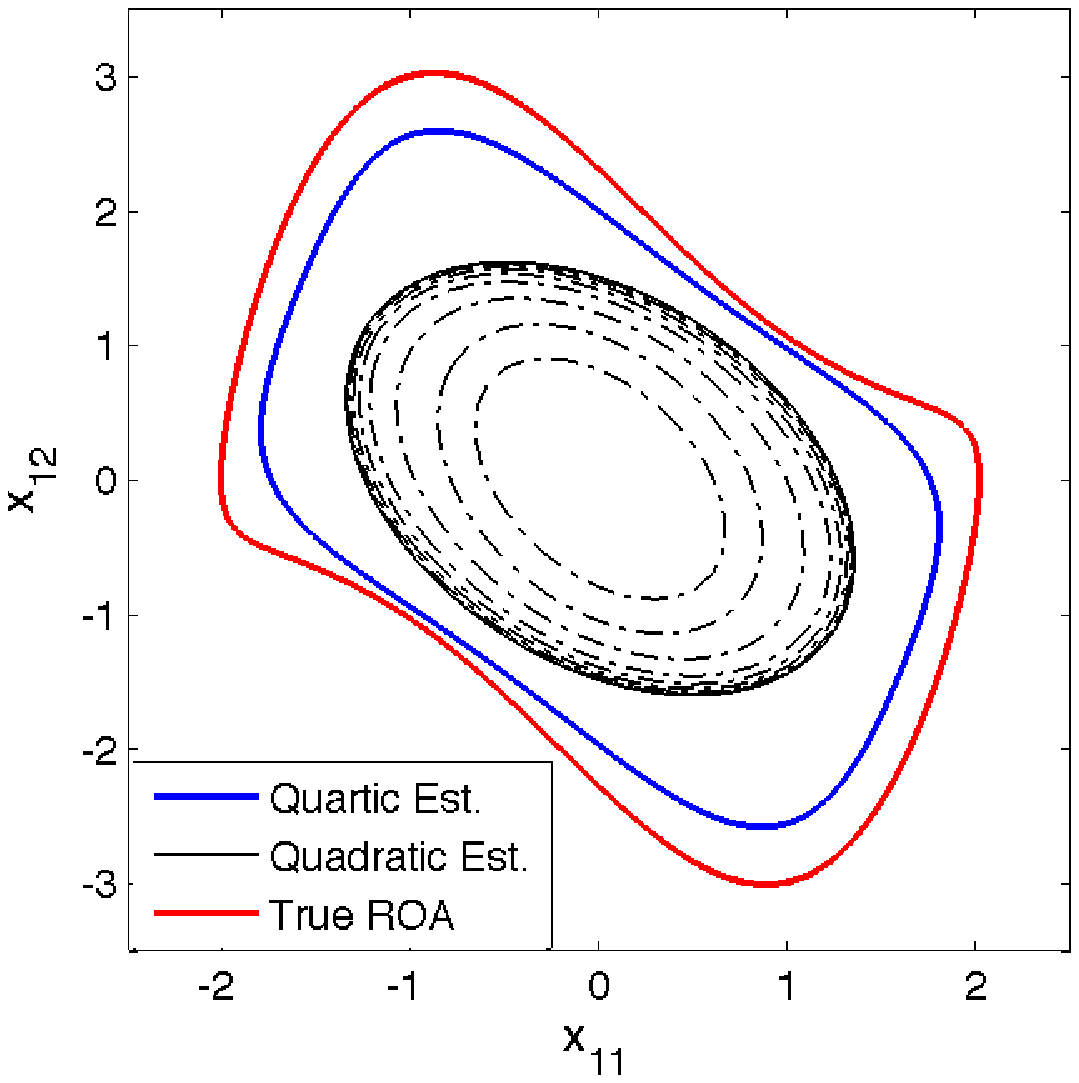}
      \caption{Comparison of estimated ROAs with true ROA for oscillator $1$.}
      \label{F:ROAcompare}
   \end{figure}   
The dynamics of each oscillator, in presence of neighbor interactions, is represented by  
 \begin{subequations}
\begin{align}
\forall j\in&\left\lbrace 1,2,\dots,9\right\rbrace,\notag\\
	&\dot{x}_{j1}= x_{j2} \\
	&\dot{x}_{j2}=\mu_jx_{j2}\left(1-x_{j1}^2\right) - x_{j1} +x_{j1}\sum_{k\neq j}\zeta_{jk}x_{k2}
\end{align}
\end{subequations}
where $\mu_j$'s are chosen randomly from $\left(-2,~0\right)$ and the coefficients, $\zeta_{jk}$, of the interaction terms are chosen randomly from $\left(-0.2,~0.2\right)$. It is to be noted, that the interactions need not be symmetric, i.e. in general, $\zeta_{ik}\neq\zeta_{ki}$. Additionally, $\zeta_{jk}=0$ if oscillator $k$ is not a neighbor of oscillator $j$. Such choice of $\mu_j$'s ensure that the oscillators themselves are stable, with corresponding regions of attraction as shown in Fig.~\ref{F:osc_net9}. The regions drawn in `red' around each oscillator shows its true ROA, while the region in `blue' shows an estimate of the ROA as sub-unity-level set of its polynomial Lyapunov function, as in (\ref{E:ROA2}). In Fig.~\ref{F:ROAcompare} we compare with the true ROA the estimates obtained using a quartic Lyapunov function and a quadratic one. Also a sequence of estimates using the quadratic Lyapunov function are shown in `dotted black' lines, which show how the `expanding interior' algorithm iteratively expands the estimate of the ROA. Clearly, the final estimate improves as the degree of the polynomial Lyapunov function increases, but for this work we choose to stick to quadratic Lyapunov functions.

We decompose the system into $7$ subsystems\footnote{This decomposition is arbitrary. \cite{Antonis:2012} presents a method of decomposition in weakly interacting subsystem, which however requires symmetric interactions.}, by grouping together oscillators $\left\lbrace2,3\right\rbrace$ and $\left\lbrace 5,6\right\rbrace$, as shown below
\begin{align}
\begin{array}{ll}
S_1:\left\lbrace osc~1\right\rbrace; & \mathcal{N}_1:\left\lbrace S_2,S_5,S_7\right\rbrace\\
S_2:\left\lbrace osc~2,~osc~3\right\rbrace; & \mathcal{N}_2:\left\lbrace S_1,S_3,S_6\right\rbrace\\
S_3:\left\lbrace osc~4\right\rbrace; & \mathcal{N}_3:\left\lbrace S_2,S_4\right\rbrace\\
S_4:\left\lbrace osc~5,~osc~6\right\rbrace; & \mathcal{N}_4:\left\lbrace S_3,S_7\right\rbrace\\
S_5:\left\lbrace osc~7\right\rbrace; & \mathcal{N}_5:\left\lbrace S_1,S_6,S_7\right\rbrace\\
S_6:\left\lbrace osc~8\right\rbrace; & \mathcal{N}_6:\left\lbrace S_2,S_5\right\rbrace\\
S_7:\left\lbrace osc~9\right\rbrace; & \mathcal{N}_7:\left\lbrace S_1,S_4,S_5\right\rbrace
\end{array}
\end{align}
Then we can write the subsystem dynamics, along with the neighbor interactions, in the form of (\ref{E:fi}). As an example, the states and the dynamics of $S_2$ are shown below,
\begin{align}
&\dot{x}_2 = f_2(x_2)+g_2(x);\quad x_2=\left(x_{21},~x_{22},~x_{31},~x_{32}\right)^T, \notag\\
&f_2=\left[\begin{array}{c}x_{22}\\ 0.12x_{21}x_{32}-x_{21}-0.41x_{22}\left(1-x_{21}^2\right)\\ x_{32}\\ 0.04x_{31}x_{22}-x_{31}-1.44x_{32}\left(1-x_{31}^2\right)\end{array}\right], \notag\\
&g_2=\left[\begin{array}{c} 0\\ -0.07x_{12}x_{21}\\0 \\ 0.01x_{12}x_{31} + 0.06x_{42}x_{31} + 0.1x_{82}x_{31}\end{array}\right]
\end{align}

Any randomly picked initial condition, $x(0)\in\mathcal{R}_A^0$, where $\mathcal{R}_A^0$ is defined in (\ref{E:ROA_isol}), can be mapped into corresponding subsystem Lyapunov function level sets, $\gamma_i^0=V_i(x_i(0)),\forall i$. Then, by choosing $v_{oi}=\gamma_i^0$, we apply the iterative stability analysis algorithm to determine whether or not the domain $\mathcal{D}$ in (\ref{E:D}) is a region of asymptotic stability, and if not, compute the necessary control by (\ref{E:Fi}).

\subsection{{Example: Certifiably Stable without Control}}
\begin{figure*}[thpb]
\centering
\subfigure[]{
\includegraphics[width=3in]{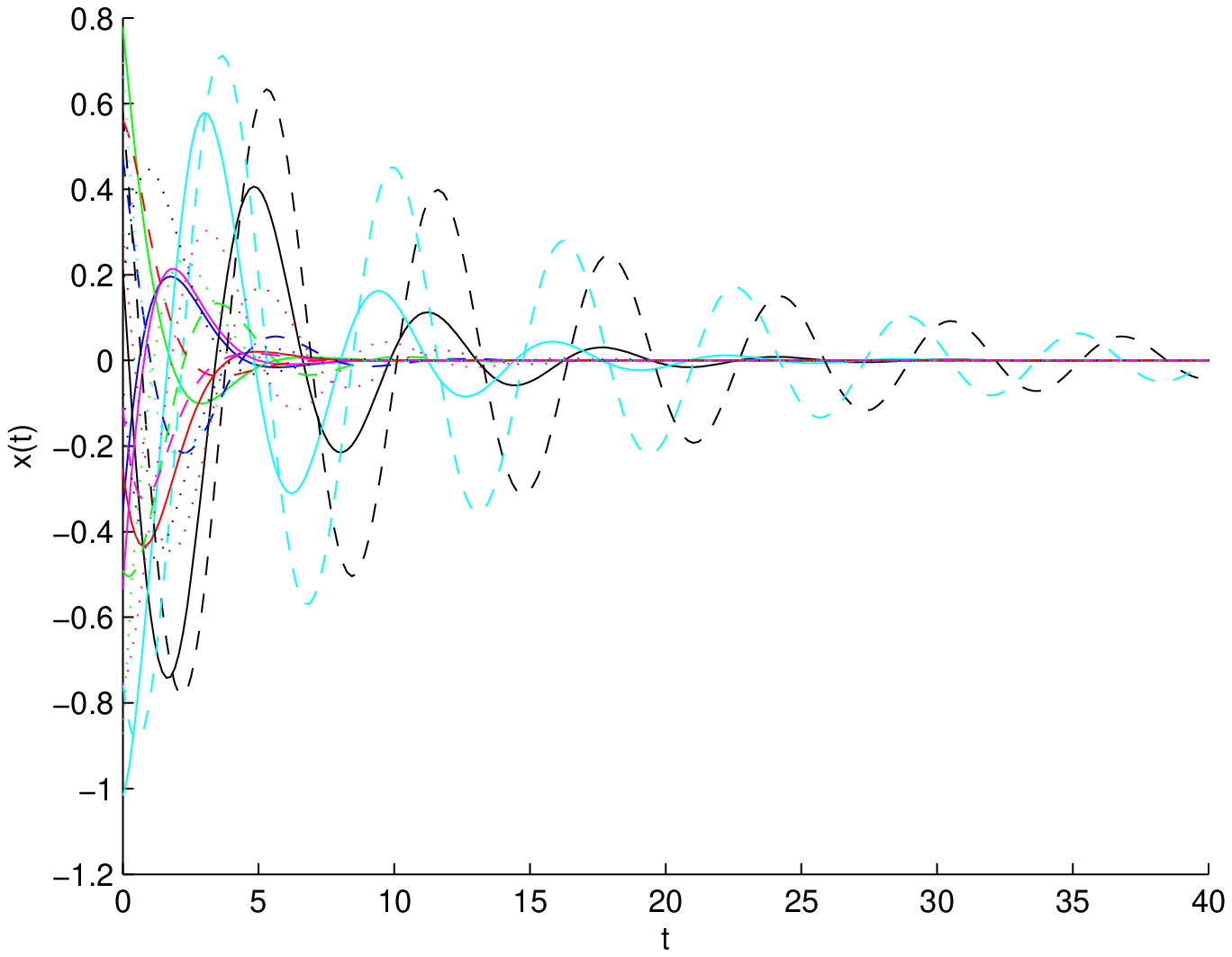}\label{F:dynamics}
}
\subfigure[]{
\includegraphics[width=3in]{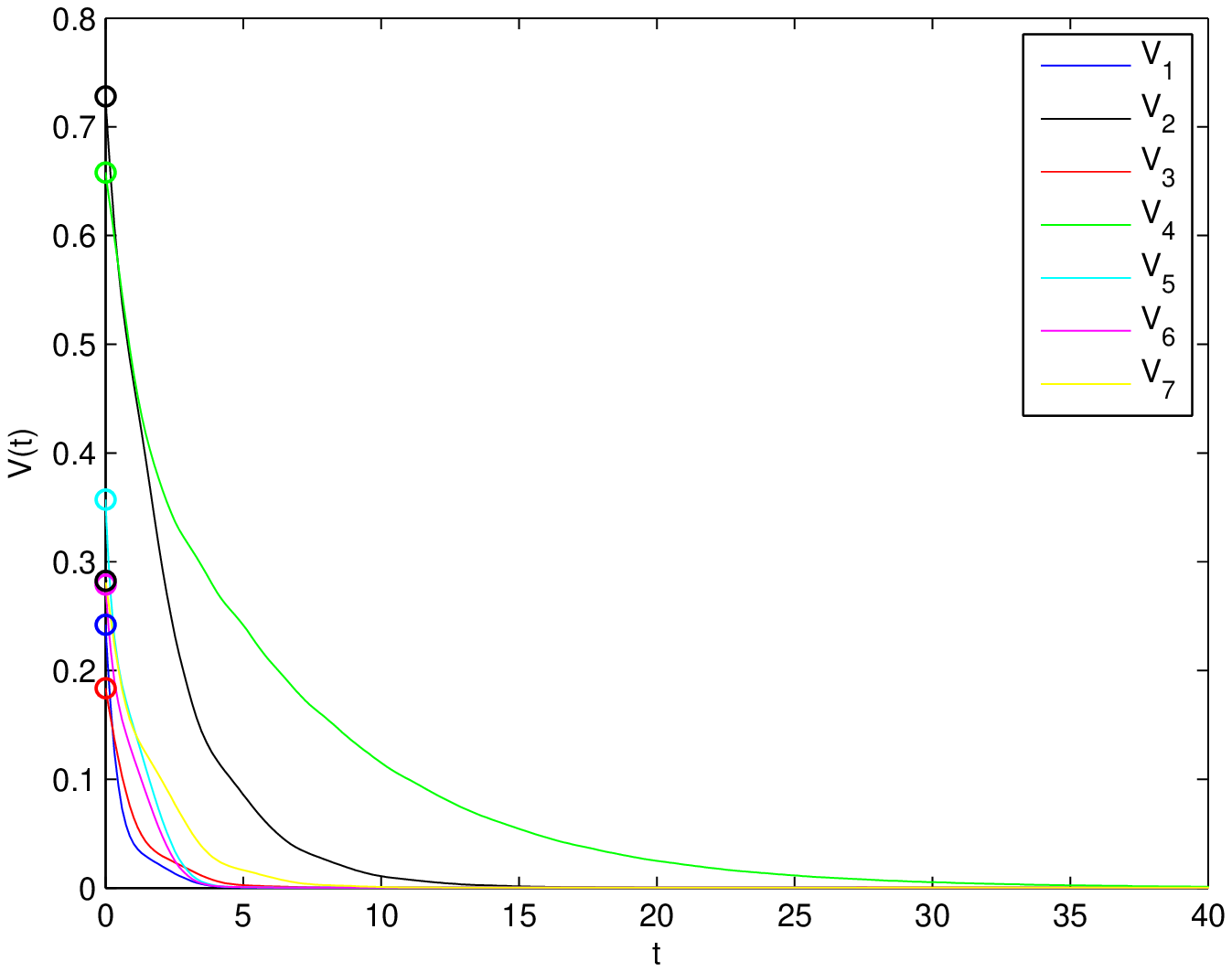}\label{F:dynamics_V}
}\caption[Optional caption for list of figures]{Evolution of states and the subsystem Lyapunov functions for a certifiably stable initial condition.}
\label{F:cert_stable}
\end{figure*}
   In Fig.~\ref{F:dynamics}, the evolutions of all the states to an asymptotically stable initial condition is shown. The subsystem Lyapunov functions, shown in Fig.~\ref{F:dynamics_V}, monotonically decrease to zero starting from the initial level sets: 
   \begin{align}
   \gamma_1^0=0.242,   ~\gamma_2^0= 0.728,  & ~\gamma_3^0= 0.184, ~ \gamma_4^0=  0.658, \notag\\
   \gamma_5^0=   0.357,  ~\gamma_6^0&=  0.279, ~ \gamma_7^0=  0.283
   \end{align} 
     Then setting $v_{oi}=\gamma_i^0$, and choosing $\bar{\epsilon}=0.001$, we run the iterative stability algorithm which produces the results in Table~\ref{Tab:stability}. At the end of the $2$nd iteration, all the $\epsilon_i^k$'s are zero, which certifies asymptotic stability of the full interconnected system.
   \begin{table}[h]
\caption{Iteration results for a certifiably stable case}
\label{Tab:stability}
\begin{center}
\begin{tabular}{|c|c|c|c|c|c|c|c|}
\hline
$k$ & $\epsilon_1^k$ & $\epsilon_2^k$ & $\epsilon_3^k$ & $\epsilon_4^k$ & $\epsilon_5^k$ & $\epsilon_6^k$ & $\epsilon_7^k$\\
\hline
0 & 0.242 & 0.728 & 0.184 & 0.658 & 0.357 & 0.279 & 0.283\\
\hline
1 & 0.024 &   0.031 &   0.147 &   0.188 &   0.004 &   0.010 &   0.127\\
\hline
2 & 0.000  &  0.000 &   0.000  &  0.000  &  0.000 &   0.000 &   0.000\\
\hline
\end{tabular}
\end{center}
\end{table}

\subsection{Example: Certifiably Stable under Control}
\begin{figure*}[thpb]
\centering
\subfigure[]{
\includegraphics[width=3in]{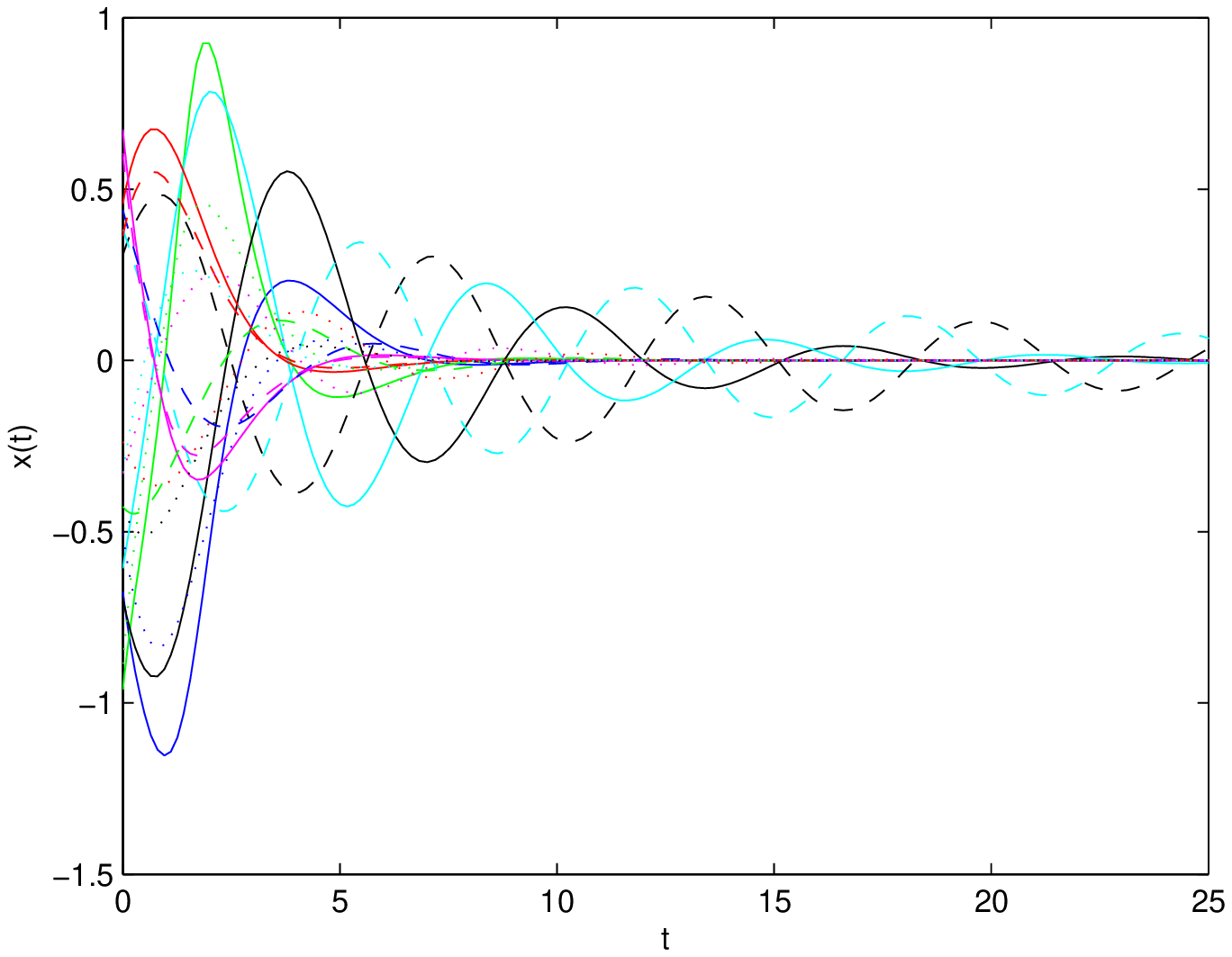}\label{F:dynamics_unstab}
}
\subfigure[]{
\includegraphics[width=3in]{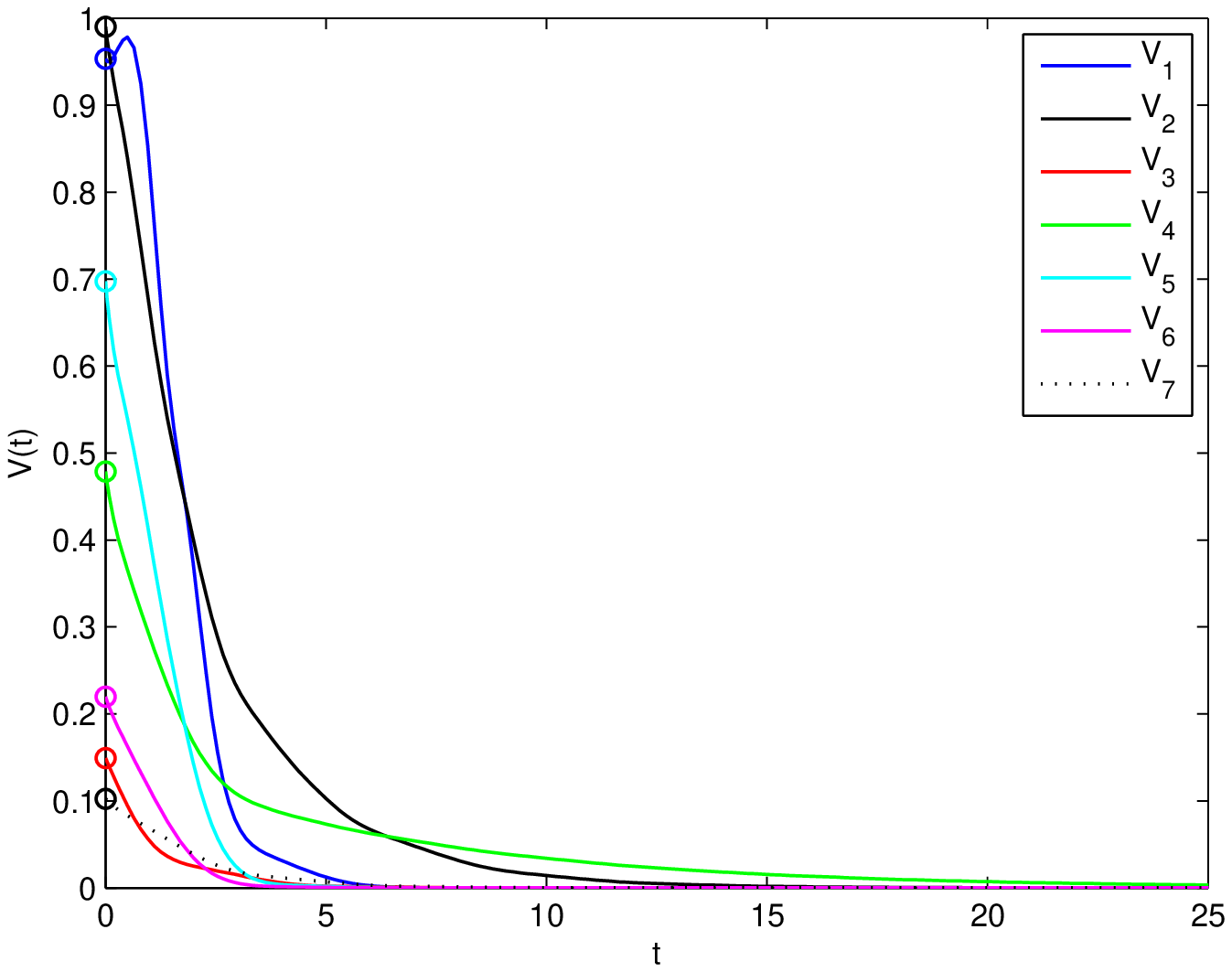}\label{F:dynamics_unstab_V}
}\caption[Optional caption for list of figures]{Evolution of states and the subsystem Lyapunov functions for a non-certifiably-stable initial condition.}
\label{F:uncert_stable}
\end{figure*}
Let us now present one example where the iterative algorithm fails to guarantee stability, and control is applied. Fig.~\ref{F:uncert_stable} shows a stable initial condition, but the algorithm fails to certify stability for it. In Fig.~\ref{F:dynamics_unstab_V}, $V_1(t)$ is seen to increase initially before starting to monotonically decrease. When we apply the stability analysis algorithm to the initial level sets:
   \begin{align}
   \gamma_1^0=0.953,   ~\gamma_2^0= 0.990,  & ~\gamma_3^0= 0.149, ~ \gamma_4^0=  0.479, \notag\\
   \gamma_5^0=   0.697,  ~\gamma_6^0&=  0.220, ~ \gamma_7^0=  0.103
   \end{align} 
the iteration fails at the first iteration because the algorithm cannot find feasible $\epsilon_i^1$'s for $i=1,2,5$ (as shown in Table~\ref{Tab:instability}).
\begin{table}[h]
\caption{Iteration fails to certify stability}
\label{Tab:instability}
\begin{center}
\begin{tabular}{|c|c|c|c|c|c|c|c|}
\hline
$k$ & $\epsilon_1^k$ & $\epsilon_2^k$ & $\epsilon_3^k$ & $\epsilon_4^k$ & $\epsilon_5^k$ & $\epsilon_6^k$ & $\epsilon_7^k$\\
\hline
0 & 0.953  &  0.990  &  0.149  &  0.479   & 0.697  & 0.220 &   0.103\\
\hline
1 & $\times$  &  $\times$ &   0.1010  &  0.0056  &  $\times$  &  0.0198   & 0.0380\\
\hline
\end{tabular}
\end{center}
\end{table}

Consequently decentralized control is activated for subsystems $S_1,S_2$ and $S_5$. This results in certifiable asymptotic stability, with new $\epsilon_i^k$'s shown in Table~\ref{Tab:stability_control} where the `$^*$' denotes presence of controllers\footnote{In this example, we chose to seek linear controllers which proved to be sufficient.} which are documented in (\ref{E:controllers}).
\begin{table}[h]
\caption{Iteration certifies stability under control}
\label{Tab:stability_control}
\begin{center}
\begin{tabular}{|c|c|c|c|c|c|c|c|}
\hline
$k$ & $\epsilon_1^k$ & $\epsilon_2^k$ & $\epsilon_3^k$ & $\epsilon_4^k$ & $\epsilon_5^k$ & $\epsilon_6^k$ & $\epsilon_7^k$\\
\hline
0 & 0.953  &  0.990  &  0.149  &  0.479   & 0.697  & 0.220 &   0.103\\
\hline
1 & 0.004$^*$  &  0.004$^*$ &   0.1010  &  0.0056  &  0.003$^*$  &  0.0198   & 0.0380\\
\hline
2 & 0.000 & 0.000 & 0.000 & 0.000 & 0.000 & 0.000 & 0.000\\
\hline 
\end{tabular}
\end{center}
\end{table}
\begin{align}\label{E:controllers}
F_1^1 &\!\!= \!\left[0, - 0.569x_{11} - 2.271x_{12}\right]^T,\notag\\
F_2^1 &\!\!=\!\left[0, - \!1.237x_{22}\!-\!0.149x_{21},~\!\!0\,,  - 0.285x_{31} \!\!-\!\! 1.368x_{32}\right]^T,\notag\\
F_5^1 &\!\!=\! \left[0, - 0.504x_{71} - 1.539x_{72}\right]^T
\end{align}
It is to be noted that we decide to apply control only on the dynamics equations of the states $x_{12},x_{22},x_{32}$ and $x_{72}$. Fig.~\ref{F:stable_control} shows that under the action of the controllers in (\ref{E:controllers}), all the subsystem Lyapunov functions decrease monotonically to zero.
\begin{figure*}[thpb]
\centering
\subfigure[]{
\includegraphics[width=3in]{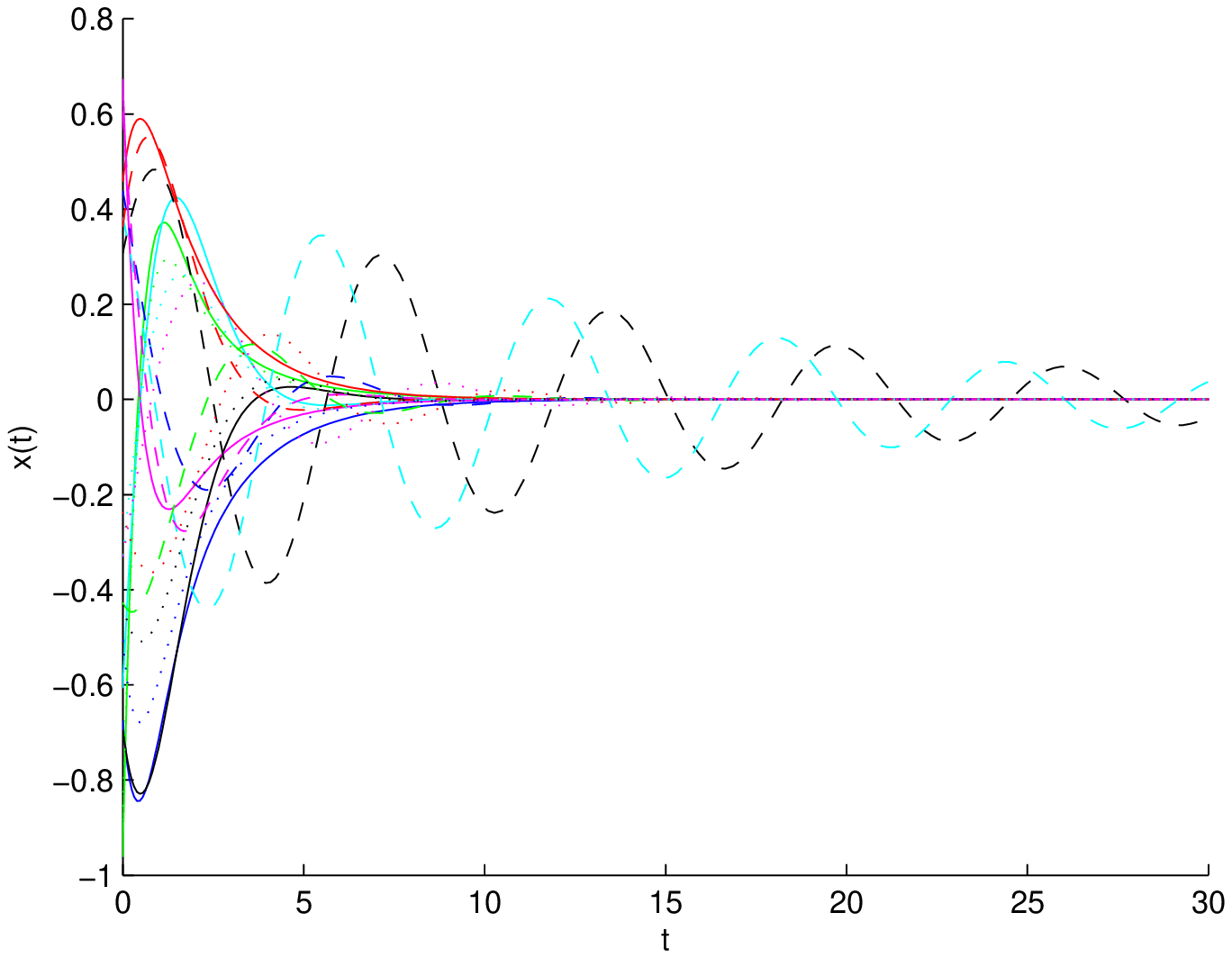}\label{F:dynamics_control}
}
\subfigure[]{
\includegraphics[width=3in]{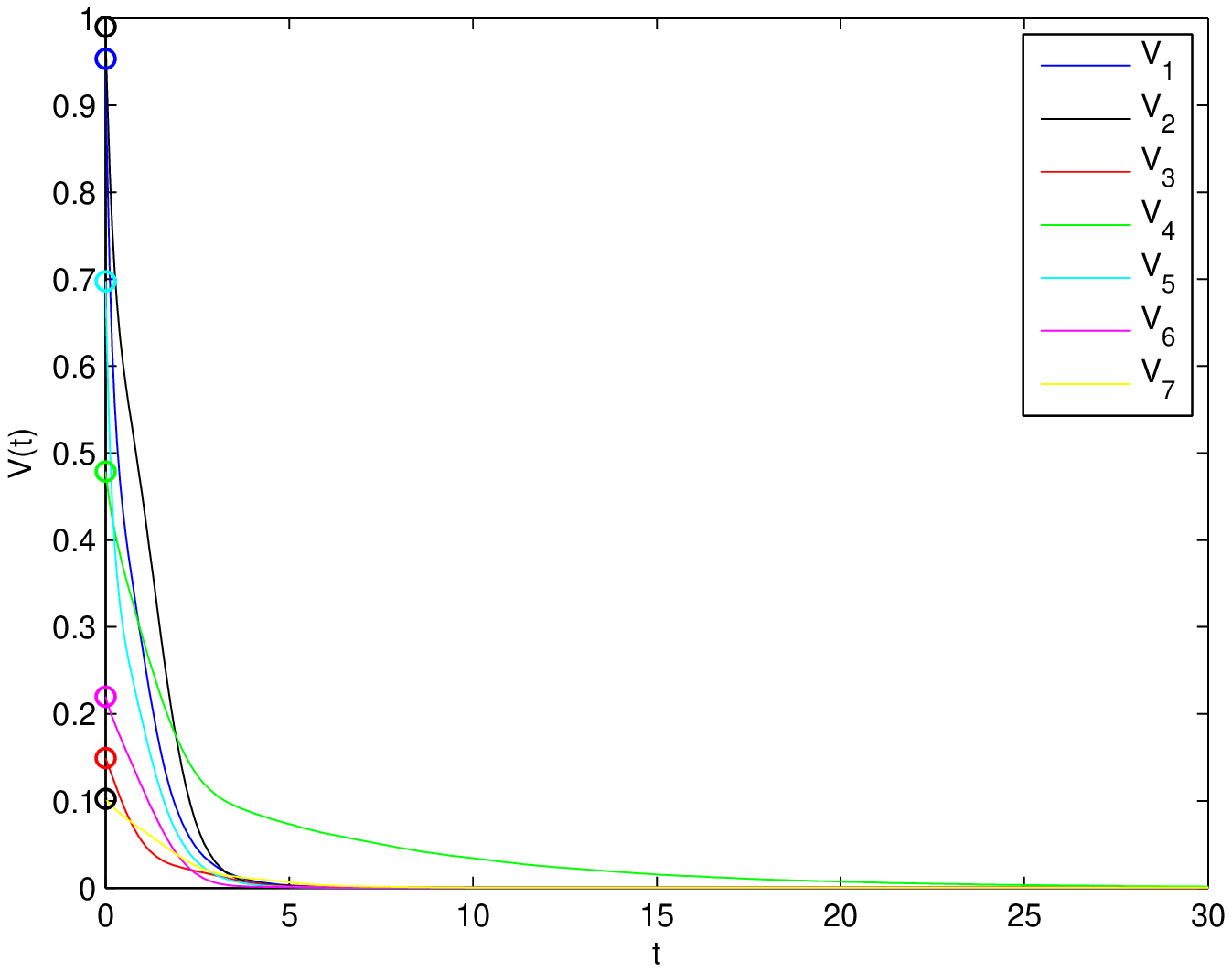}\label{F:dynamics_control_V}
}\caption[]{Evolution of states and the subsystem Lyapunov functions, under decentralized control applied at $S_1,S_2$ and $S_5$, that certifies stability.}
\label{F:stable_control}
\end{figure*}

\section{CONCLUSIONS}\label{S:conclusion}
{In this work, we present an algorithmic approach to certify asymptotic stability of an interconnected system whose dynamics can be expressed in polynomial form. We also propose the design of decentralized control laws when such a certification is not possible. The approach presented here is parallel and scalable. Similar method can also be applicable to complex real world systems, such as the power system. While power system dynamics are non-polynomial, if those are transformed into polynomial forms, by introducing additional equality constraints \cite{Anghel:2013}, the methods developed in this article can be applied. Future work need to address the issues such as including bounds on the control effort, and relaxing the requirement of monotonic decrease of subsystem Lyapunov functions along the flow.}

\appendix

\subsection{Proof of Lemma~\ref{L:asymptotic}}\label{A:proof}

{We note that since $\lim_{k\rightarrow +\infty} \epsilon_i^k= 0,\forall i$, 
\begin{align}\label{E:proof_delta}
\forall \delta\in\left(0,\min_i\epsilon_i^0\right], ~\exists K, ~\text{s.t.}~\epsilon_i^k<\delta~\forall k>K,\forall i.
\end{align}
Let us assume, without any loss of generality, that 
\begin{align}
\exists t_0\geq 0, ~\text{s.t.}~x(t_0)\in\left\lbrace x\in\mathbb{R}^n\left| \bigcap_{i=1}^m V_i(x_i)\leq \epsilon_{i}^0\right.\right\rbrace
\end{align}
Then,
\begin{subequations}
\begin{align}
\forall i, \quad& V_i(t) = \epsilon_i^0 + \int_{t_0}^t \dot{V}_i(\tau)d\tau,\quad \forall t\geq t_0\\
\implies & \exists~\! t_i^1 < t_0 + \left(\epsilon_i^1-\epsilon_i^0\right)/\bar{r}_i^1,\quad \bar{r}_i^1:=\sup_{x\in\mathcal{D}_i^1} \dot{V}_i(x)<0\\
	& \text{s.t.}~V_i(t)<\epsilon_i^1,~\forall t\geq t_i^1
\end{align}
\end{subequations}
Hence we can argue that,
\begin{align}
V_i(t)\leq\epsilon_i^0,&~\forall t\geq t_0,\forall i\notag\\
\implies &\exists~\! t^1:=\max_i t_i^1,~\text{s.t.}~V_i(t)\leq \epsilon_i^1,\forall t\geq t^1,\forall i
\end{align}
Following similar arguments it is easy to show that,
\begin{align}\label{E:proof_tk}
V_i(t)\leq\epsilon_i^0 &,~\forall t\geq t_0,~\forall i\notag\\
\implies &\forall k, ~\exists~\! t^k\geq t_0,~\text{s.t.}~V_i(t)\leq \epsilon_i^k,~\forall t\geq t^k,~\forall i
\end{align}
Finally combining (\ref{E:proof_delta}) and (\ref{E:proof_tk}) we observe,
\begin{align}
\forall \delta\in\left(0,\min_i\epsilon_i^0\right],~\exists ~\! t^K\geq t_0, ~\text{s.t.}~V_i(t)<\delta,~\forall t\geq t^K,~\forall i\notag
\end{align}
which concludes the proof, because of (\ref{E:cond_asymp}).
}

\addtolength{\textheight}{-12cm}   
                                  
\bibliographystyle{IEEEtran}
\bibliography{references}

\end{document}